
\documentclass{article}

\newtheorem{theorem}{Theorem}[subsection]

\newtheorem{lemma}[theorem]{Lemma}

\newcommand{\qed}{\rule{1mm}{3mm}}     
\newenvironment{proof}{\vspace*{\parsep}\noindent {\bf proof:}}{\qed\\[1em]}
\usepackage{amsmath, amssymb}
\begin{document}

\title{CZF and Second Order Arithmetic}
\author{Robert S. Lubarsky\thanks{Thanks are due to the referee for the careful reading
and helpful suggestions received.}\\Dept. of Mathematical Sciences\\Florida Atlantic University\\Boca Raton, FL 33431, USA\\Robert.Lubarsky@alum.mit.edu}
\maketitle

\abstract{Constructive ZF + full Separation is shown to be equiconsistent with Second Order Arithmetic.}

\section{Introduction}

CZF, Constructive Zermelo-Fraenkel Set Theory, is an axiomatization of set theory in intuitionistic logic strong enough to do much standard mathematics yet modest enough in proof-theoretical strength to qualify as constructive. Based originally on Myhill's CST \cite{my}, CZF was first identified and named by Aczel \cite{aczel, ac2, ac3}. Its axioms are:
\begin{itemize}
\item Pairing: $\forall x, y \; \exists z \; \forall w \; w \in z \leftrightarrow (w=x \vee w=y)$
\item Union: $\forall x \; \exists y \; \forall z (z \in y \leftrightarrow \exists w (w \in x \wedge z \in w)$
\item Extensionality: $\forall x, y \; x=y \leftrightarrow \forall z (z \in x \leftrightarrow z \in y)$
\item Set Induction (Schema): $(\forall x ((\forall y \in x \; \phi(y)) \rightarrow \phi(x))) \rightarrow \forall x \; \phi(x)$
\item $\Delta_{0}$ Separation (Schema): $\forall x \; \exists y \; \forall z \; z \in y \leftrightarrow (z \in x \wedge \phi(z))$, for $\phi$ a $\Delta_{0}$ formula
\item Strong Infinity: $\exists x \; [(\emptyset \in x) \wedge (\forall y \in x \; (y \cup \{y\}) \in x) \wedge \forall z ((\emptyset \in z \wedge \forall y \in z \; y \cup \{y\} \in z) \rightarrow z \subseteq x)]$
\item Fullness (AKA Subset Collection): $\forall x, y \; \exists z \; \forall$ R (if R is a total relation from $x$ to $y$ then there is a total relation R' $\in$ z from $x$ to $y$ such that R' $\subseteq$ R)
\item Strong Collection (Schema): $\forall x (\forall y \in x \; \exists z \; \phi(y, z) \rightarrow \exists w \; (\forall y \in x \; \exists z \in w \; \phi(y, z) \wedge \forall z \in w \; \exists y \in x \; \phi(y, z)).$
\end{itemize}

Among Aczel's accomplishments was an interpretation of CZF, and various extensions thereof, in Martin-L\"of type theory, which established CZF as a predicative theory. As it turns out, CZF is proof-theoretically equivalent with Martin-L\"of type theory ML$_{1}$V, as well as KP (Kripke-Platek admissible set theory), ID (inductive definability), and a host of other identified theories. Such theories can arguably be considered to be the limit of predicative mathematics, allow for a philosophical justification as being constructive, and are in any case very much weaker proof-theoretically than ZF. For an overview, references, and some proofs, see \cite{rathjen}.

It is a natural enough question to ask about the strength of variants of CZF. Michael Rathjen conjectured that adding full Separation to CZF elevates the theory's strength from ID, which is a small fragment of Second Order Arithmetic, to full Second Order Arithmetic. In this note, this conjecture will be shown to be correct.

What is the value of such a result? It is certainly nicer if a theory is shown to be weak, in that one can then use that theory with fewer philosophical scruples. This is the case for instance in \cite{ac2}, see also \cite{rathjen}, where it is shown that adding certain choice principles to CZF does not change the proof-theoretic strength. Still, knowing that CZF + full Separation has the strength of Second Order Arithmetic tells us at least that the former theory is not predicative, providing a warning to the constructively-minded mathematician not to work within it. In addition, the exact determination of this strength provides independence results. For instance, Rathjen (\cite{rath2}, prop. 7.12 (ii)) shows that CZF + Power Set is proof-theoretically stronger than n$^{th}$ Order Arithmetic for all n, from which it follows that CZF, being much weaker, does not prove Power Set. Similarly, it follows from the result in this paper that even with the addition of full Separation Power Set does not follow. (The latter result is re-proven model-theoretically in \cite{lubarsky}.) None of this should be surprising. Both the main result of this paper and the consequences just cited were anticipated by Rathjen (and perhaps others), and the ideas contained in the proof to follow are themselves mostly a reworking of those found in \cite{aczel} and \cite{rathjen}. Still, at least now the results are formally established.

By way of additional background, recent work on CZF and other intuitionistic set theories, often involving categorical models, has been done by S. Awodey, C. Butz, N. Gambino, E. Griffor, A. Joyal, I. Moerdijk, E. Palmgren, A.K. Simpson, Th. Streicher, and M. Warren. For an excellent overview and references, see \cite{ac4} or \cite{simp}.

\section{Interpreting CZF + full Separation in Second Order Arithmetic}

First we will work within Second Order Arithmetic with the Axiom of Choice (AC$_{\omega}$) to prove the consistency of CZF + full Separation. The Axiom of Choice in this context is the assertion that $\forall n \; \exists X \; \phi(n, X) \rightarrow \exists X \; \forall n \; \phi(n, X_{n}),$ where $X_{n}$ is the n$^{th}$ slice of X according to some recursive coding scheme. It is an old observation that the consistency strength of Second Order Arithmetic is unchanged by the addition of AC$_{\omega}$; see for instance \cite{BFPS}.

NOTATION: Natural numbers will be used to stand for recursive functions, via some standard encoding. The variables used for numbers in this context will be $e, f, g, h,$ and variants thereof. It is assumed that the language in question has a two-place function symbol App, which on inputs e and x returns the result of applying the e$^{th}$ recursive function to x. For readability, this will be written as \{$e$\}(x). We will avail ourselves of $\lambda$-notation to describe recursive functions. This means that if some recursive procedure P is given for producing an integer P(x), depending uniformly on an input integer x, then $\lambda$x.P(x) will stand for an integer e such that \{e\}(x) = P(x). We also assume the choice of a distinguished recursive tupling function, $\langle \; \rangle$, with recursive arity function and recursive projection functions (-)$_{i}, i \in \omega$. Vector notation $\vec a$ will be used to denote tuples, i.e. $\vec a$ is an abbreviation for $\langle a_{0}, ... , a_{n} \rangle$. Concatenation of tuples is given by $\frown: \vec a \frown \vec b = \langle a_{0}, ... , a_{n}, b_{0}, ... , b_{m} \rangle$. If $\hat a$ is an integer then $\vec a \frown \hat a$ is taken as shorthand for $\vec a \frown \langle \hat a \rangle$, and similarly for $\hat a \frown \vec a$. If X is a collection of tuples, then $\vec a \frown X = \{ \vec a \frown \vec b \mid \vec b \in X \}$. Also, let $\vec a^{X}$ be \{ $\vec b \mid \vec a \frown \vec b \in$ X \}. If $\vec a = \langle a \rangle$, then $\langle a \rangle \frown X$ and $\langle a \rangle^{X}$ will often be abbreviated as $a \frown X$ and $a^{X}$ respectively. (Note that $\vec a^{X}$ might well be non-trivial even if $\vec a \not \in$ X, as X might contain only proper extensions of $\vec a$.)

The proof will be via a realizability interpretation. By way of terminology, we will refer to certain reals in the model of arithmetic as representing or being sets in the model of CZF. A real is itself a set of integers. In order to avoid confusion as to whether any given object is claimed to be a set of integers in the given model of arithmetic or to be a CZF-set, the word ``set" will be reserved exclusively for the latter, the former being referred to as collections of integers or, more simply, reals.

Under this interpretation, a set is given by a non-empty real S, consisting only of tuples, which forms a tree, and which moreover is well-founded:
$$\forall X [(X \subseteq S \wedge \forall \vec a \in S (\forall \vec a \frown \hat a \in S \; \vec a \frown \hat a \in X \rightarrow \vec a \in X)) \rightarrow X = S].
$$
(Any X satisfying the condition in the antecedent will be called {\it inductive}. Since inductivity is, strictly speaking, dependent on the choice of ambient real S, sometimes for clarity such an X will be described as {\it inductive relative to S}.) The idea is that the members of S are given by integers $a$ such that $\langle a \rangle \in$ S, $a$'s members are given by integers $b$ such that $\langle a, b \rangle \in$ S, and so on.
The property of being a set, notated as Set(S), is $\Pi_{1}^{1}$-definable.

\begin{lemma} If Set(S) and $\vec a \in$ S then Set($\vec a^{S}$).
\end{lemma}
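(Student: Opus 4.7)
The plan is to verify each clause of the definition of Set in turn for $\vec a^{S}$. The three easier clauses all descend from the corresponding property of $S$. First, $\vec a^{S}$ is non-empty because $\vec a \frown \langle \rangle = \vec a \in S$ shows $\langle \rangle \in \vec a^{S}$. Second, every element of $\vec a^{S}$ is a tuple, by the very definition of concatenation. Third, $\vec a^{S}$ is a tree: if $\vec b \frown \hat b \in \vec a^{S}$ then $\vec a \frown \vec b \frown \hat b \in S$, so the tree property of $S$ gives $\vec a \frown \vec b \in S$, i.e., $\vec b \in \vec a^{S}$.

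The substantive clause is well-foundedness. Suppose $X \subseteq \vec a^{S}$ is inductive relative to $\vec a^{S}$; the goal is $X = \vec a^{S}$. The strategy is to lift $X$ to a subset $Y$ of $S$, verify that $Y$ is inductive relative to $S$, and invoke the well-foundedness of $S$. Take
\[
Y \;:=\; \bigl(S \setminus (\vec a \frown \vec a^{S})\bigr) \;\cup\; \bigl(\vec a \frown X\bigr),
\]
so $Y$ retains every node of $S$ that does not extend $\vec a$, and among those that do it keeps exactly the $\vec a \frown \vec b$ with $\vec b \in X$.

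To check that $Y$ is inductive relative to $S$, fix $\vec c \in S$ all of whose $S$-successors lie in $Y$. If $\vec a$ is not a prefix of $\vec c$, then $\vec c \in S \setminus (\vec a \frown \vec a^{S}) \subseteq Y$. Otherwise $\vec c = \vec a \frown \vec b$ for some $\vec b \in \vec a^{S}$; each successor $\vec a \frown \vec b \frown \hat c \in S$ extends $\vec a$, so the hypothesis forces it into $\vec a \frown X$, meaning $\vec b \frown \hat c \in X$. The inductivity of $X$ relative to $\vec a^{S}$ now yields $\vec b \in X$, whence $\vec c \in Y$. Well-foundedness of $S$ then gives $Y = S$; intersecting with $\vec a \frown \vec a^{S}$ and using $X \subseteq \vec a^{S}$ yields $\vec a \frown \vec a^{S} \subseteq \vec a \frown X$, so $\vec a^{S} \subseteq X$ and thus $X = \vec a^{S}$.

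The main obstacle is picking the right $Y$: it must simultaneously (i) leave the portion of $S$ away from the $\vec a$-branch untouched, so that inductivity is trivially preserved on that part, and (ii) transfer the given inductivity of $X$ on $\vec a^{S}$ into inductivity along the $\vec a$-branch inside $S$. Once $Y$ is set up as above, what remains is routine bookkeeping with prefixes and the identity $\vec a \frown \vec a^{S} = \{\vec c \in S : \vec a \text{ is a prefix of } \vec c\}$.
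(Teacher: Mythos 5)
Your proof is correct and follows essentially the same route as the paper: the lifted set $Y$ you define is exactly the paper's $X^{+} = (\vec a \frown X) \cup (S \setminus (\vec a \frown \vec a^{S}))$, and the inductivity check and the final extraction of $\vec a^{S} \subseteq X$ proceed identically. The only difference is that you also spell out the routine non-emptiness, tuple, and tree clauses, which the paper leaves implicit.
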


\begin{proof}
Suppose X $\subseteq \vec a^{S}$ is inductive: $\forall \vec b \in \vec a^{S} (\forall \vec b \frown \hat b \in a^{S} \; \vec b \frown \hat b \in X \rightarrow \vec b \in X)$. We must show that X = $\vec a^{S}$.

Let X$^{+}$ = ($\vec a \frown X$) $\cup$ (S $\backslash$ $(\vec a \frown \vec a^{S})$). We claim that also X$^{+}$ is inductive (relative to S). To see this, suppose that, for a given $\vec b$, for all $\hat b$ such that $\vec b \frown \hat b \in S$, $\vec b \frown \hat b \in X^{+}$. If $\vec b$ does not extend $\vec a$ then $\vec b \in S \backslash (\vec a \frown \vec a^{S}) \subseteq X^{+}$. If $\vec b$ does extend $\vec a^{S}$, say $\vec b = \vec a \frown \vec c$, then, if $\vec c \frown \hat b \in \vec a^{S}, \; \vec b \frown \hat b \in S$. By the hypothesis on $\vec b$, $\vec b \frown \hat b \in X^{+}$, hence $\vec c \frown \hat b \in X$. Since X is inductive, $\vec c \in X$, which yields $\vec b \in X^{+}$. Hence X$^{+}$ is inductive. Since Set(S), X$^{+}$ = S.

To finish up, if $\vec b \in \vec a^{S}$, then $\vec a \frown \vec b$ is in S, and hence also in X$^{+}$. Since $\vec a \frown \vec b$ could not enter X$^{+}$ via the second clause (in X$^{+}$'s definition), it must have entered via the first: $\vec a \frown \vec b \in (\vec a \frown X)$. That means $\vec b \in X$, i.e. $\vec a^{S} \subseteq X$.
\end{proof}

The realizability relation is defined recursively as follows:
\begin{itemize}
\item $e \vdash X \in S \leftrightarrow e_{1} \vdash X = e_{0}^{S} \wedge \langle e_{0} \rangle \in S$
\item $e \vdash S=T \leftrightarrow (\forall \langle a \rangle \in S \; \{e_{0}\}(a) \vdash a^{S} \in T) \wedge (\forall \langle b \rangle \in T \; \{e_{1}\}(b) \vdash b^{T} \in S) $
\item $e \vdash \phi \vee \psi \leftrightarrow (e_{0} = 0 \wedge e_{1} \vdash \phi) \vee (e_{0} = 1 \wedge e_{1} \vdash \psi)$
\item $e \vdash \phi \wedge \psi \leftrightarrow (e_{0} \vdash \phi) \wedge (e_{1} \vdash \psi)$
\item $e \vdash \phi \rightarrow \psi \leftrightarrow \forall f \; (f \vdash \phi \rightarrow \{e\}(f) \vdash \psi)$
\item $e \vdash \forall X \phi(X) \leftrightarrow \forall X (Set(X) \rightarrow e \vdash \phi(X))$
\item $e \vdash \exists X \phi(X) \leftrightarrow \exists X (Set(X) \wedge e \vdash \phi(X))$
\end{itemize}
(As is standard, $\phi \leftrightarrow \psi$ is an abbreviation for $\phi \rightarrow \psi \wedge \psi \rightarrow \phi$, and $\neg \phi$ for $\phi \rightarrow \bot$, where it follows by omission from the above that nothing realizes $\bot$.)

Although intuitionistically the Levy hierarchy of $\Sigma_{n}$ and $\Pi_{n}$ formulas does not work as in the classical case, we will still have use of the classical Levy rank of a formula. In the following, the assertion ``$\phi$ is $\Sigma_{n}$ (resp. $\Pi_{n}$)" is to be understood as giving $\phi$'s classical rank, even when $\phi$ is intended and used as an intuitionistic formula. The reason for this is that the relation ``$e \vdash \phi$" is $\Sigma_{n+1}^{1}$ (resp. $\Pi_{n+1}^{1}$) for $\phi$ a $\Sigma_{n}$ (resp. $\Pi_{n}$) formula (classically), uniformly in $e$ and $\phi$. This fact follows easily from the inductive definition of $\vdash$.

We now show that under this interpretation all of the axioms of CZF are valid.
\begin{enumerate}
\item intuitionistic logic

That all of the (standard) realizability interpretations satisfy the axioms and
inference rules of intuitionistic logic is by now well understood. For examples
and details, see for instance \cite{beeson} or \cite{mccarty}.
(Thanks to the referee for relaying the information that in the
latter the proof of the closure lemma seems to be mistaken. A
correction is reported to appear in \cite{KGO}.)

\item equality axioms

Realizers need to be given for:
\begin{enumerate}
\item $\forall x \; x=x$

\item $\forall x, y \; x=y \rightarrow y=x$

\item $\forall x, y, z \; x=y \wedge y=z \rightarrow x=z$

\item $\forall x, y, z \; x=y \wedge z \in x \rightarrow z \in y$

\item $\forall x, y, z \; x=y \wedge x \in z \rightarrow y \in z.$

\end{enumerate}
\begin{enumerate}
\item Unraveling the definitions, we need to find a realizer e so that, for i = 0,1
$$\forall \langle a \rangle \in x \; \{e_{i}\}(a)_{1} \vdash \langle a \rangle^{x} = \{e_{i}\}(a)_{0}^{x}.
$$
With this in mind, let g be such that
$$\{g\}(f) \; = \; \langle \lambda x. \langle x,f \rangle, \lambda x. \langle x,f \rangle \rangle.
$$
Let e be a fixed point for g: e = \{g\}(e). e is as desired. (In clauses 9) and 10) below, e will be referred to as Id.)

\item $\lambda e. \langle e_{1}, e_{0} \rangle$ is as desired.
\end{enumerate}
(c)-(e) are of a similar flavor, and are left to the reader, who can also find proofs in \cite{mccarty}. Note that it follows by induction on formulas that for all formulas $\phi$ there is a realizer for $(x=y \wedge \phi(x)) \rightarrow \phi(y)$, with (c)-(e) being the base cases.

\item extensionality

A realizer for this follows readily from the equality axioms above and the definitions of ``$e \vdash S=T$" and ``$e \vdash X \in S$".

\item set induction

Given a formula $\phi$, we need a realizer for
$$\forall S (\forall T \in S \; \phi(T) \rightarrow \phi(S)) \rightarrow \forall S \phi(S).
$$
Let e realize the antecedent. We need a realizer h(e) for the conclusion, recursively and uniformly in e; then $\lambda$e.h(e) would be the desired realizer. Let h(e) be \{e\}($\lambda$ x. h(e)). By the Recursion Theorem, h(e) is a recursive function, and by its definition is uniform in e. To see that $h(e) \vdash \forall S \phi(S)$, let S be a set; we need to show that $h(e) \vdash \phi(S)$. Let X be $\{ \vec a \in S \mid h(e) \vdash \phi(\vec a^{S})\}.$ By the assumption on e and definition of h(e), if $\forall \vec a \frown \hat a \in S \; \vec a \frown \hat a \in X$, then $\vec a \in X$. By the well-foundedness of S, X=S. Hence $\langle \; \rangle \in$ X, and $h(e) \vdash \phi(\langle \; \rangle^{S})$.

\item pairing

Given S and T, let U be $(0 \frown S) \cup (1 \frown T)$. U is as desired.

\item union

Given S, let T be $\{ \vec a \mid \exists a \; a \frown \vec a \in S \} \; \cup \; \{\langle \; \rangle \}$. T is as desired. (The reader might wonder why we have to throw the empty sequence into T. Recall that, in order to represent a set, T must be non-empty, by definition. If S represents the empty set, i.e. if S = $\{ \langle \; \rangle \}$, then $\{ \vec a \mid \exists a \; a \frown \vec a \in S \}$ would be the empty set itself. In this case T as actually defined would consist solely of $\langle \; \rangle$, making $\bigcup \emptyset = \emptyset$, as desired.)

\item strong infinity

We will define S$_{n}$ inductively on n. Let S$_{0}$ be $\{ \langle \; \rangle \}$. Given S$_{m} (m<n) $ let S$_{n}$ be $\{ m \frown \vec a \mid m<n \wedge \vec a \in S_{m} \}$. Let S$_{\omega}$ be $\{ n \frown \vec a \mid n \in \omega, \; \vec a \in S_{n} \}$. S$_{\omega}$ is as desired.

\item full separation

Given a set S and a formula $\phi$, we need to show the existence of $\{ a \in S \mid \phi(a) \}$. Let S$_{\phi}$ be $\{ \langle \langle f, a_{0} \rangle, a_{1}, ... , a_{n} \rangle \mid \langle a_{0}, a_{1}, ... , a_{n} \rangle \in S \wedge f \vdash \phi(a_{0}^{S}) \}$. S$_{\phi}$ exists, by the definability of $\vdash$. It is also a tree, almost, lacking merely the empty sequence. So let Set$_{S, \phi}$ be S$_{\phi}$ $\cup \; \{ \langle \; \rangle \}$. We claim that Set$_{S, \phi}$ is as desired.

To see that Set$_{S, \phi}$ is a set, it remains only to check that it is well-founded. To this end, suppose X $\subseteq$ Set$_{S, \phi}$ is inductive. It is easy to verify that for any $\vec a$ $\vec a^{X}$ is an inductive subset of $\vec a^{Set_{S, \phi}}$. Moreover, if $\vec a \not = \langle \; \rangle$ then there is a $\vec b$ such that $\vec a^{Set_{S, \phi}}$ = $\vec b^{S}$ (For $\vec a = \langle \langle f, a_{0} \rangle, a_{1}, ... , a_{n} \rangle$, let $\vec b = \langle a_{0}, a_{1}, ... , a_{n} \rangle$.). By the lemma earlier in this section, $\vec b^{S}$ is a set (i.e. is well-founded), so $\vec a^{X} = \vec b^{S}$ and $\vec a^{X} = \vec a^{Set_{S, \phi}}$. This shows that S$_{\phi} \subseteq$ X. As a final step, since X is inductive, Set$_{S, \phi} \subseteq$ X.

It remains only to find a realizer for
$$ \forall x \; (x \in Set_{S, \phi} \leftrightarrow (x \in S \; \wedge \; \phi(x)))
$$
which is independent of S and Set$_{S, \phi}$. (The independence is necessary by
the $\forall$ and $\exists$ clauses in the definition of $\vdash$, which do not
allow the realizer to access the sets chosen.) Choose any set x. Going from left
to right, suppose e $\vdash ``x \in Set_{S, \phi}"$. That means that e$_{1}
\vdash ``x = e_{0}^{Set_{S, \phi}}"$ and $\langle e_{0} \rangle \in Set_{S, \phi}$. By the definition of Set$_{S, \phi}$,
$\langle e_{01} \rangle \in$ S, $e_{01}^{S} = e_{0}^{Set_{S, \phi}}$, and
$e_{00} \vdash \phi(e_{01}^{S})$. This yields that $\langle e_{01}, e_{1}
\rangle \vdash ``x \in S"$. Furthermore, from $e_{00} \vdash \phi(e_{0}^{Set_{S, \phi}})$ and e$_{1} \vdash ``x = e_{0}^{Set_{S, \phi}}"$, a realizer g for $\phi(x)$ can be computed (uniformly in $e_{00}$ and $e_{1}$). The desired realizer for the right hand side is then $\langle \langle e_{01}, e_{1} \rangle, g \rangle$. The other direction is similar.

\item subset collection

We will prove Fullness, which means finding an integer e such that
$$\forall S, T \; \exists C \; e \vdash ``C \subseteq TotRel(S,T) \wedge \forall U \in TotRel(S,T) \; \exists V \in C \; V \subseteq U"
$$
where TotRel(S,T) is the collection of total relations from S to T. The choice of
e will be facilitated by considering the kind of C's we will ultimately be working
worth (although e must not depend on the choice of C!). To motivate the choice of
C's, suppose we have a realizer for U being such a total relation:
f $\vdash \forall x \in S \; \exists y \in T \; \langle x, y \rangle \in U$
(neglecting here that every element in U must also come from S $\times$ T).
Unraveling the definitions yields
$$\forall x, g (g \vdash ``x \in S" \; \rightarrow \; \exists y \{f\}(g)_{01} \vdash ``y = \{f\}(g)_{00}^{T}"
\wedge \{f\}(g)_{11} \vdash ``\langle x,y \rangle = \{f\}(g)_{10}^{U}").
$$
Letting Id be such that Id $\vdash \forall x \; x=x$, and $a$ so that $\langle a \rangle \in$ S, unraveling the definitions shows that $\langle a,Id \rangle \vdash a^{S} \in$ S. Instantiating x with $a^{S}$ and g with $\langle a,Id \rangle$ produces
$$\{f\}(\langle a,Id \rangle)_{11} \vdash ``\langle a^{S},y \rangle = \{f\}(\langle a,Id \rangle)_{10}^{U}".
$$
Let
$$f \upharpoonright S \times T = \{ \vec a \in U \mid \exists \langle a \rangle \in S \; a_{0} = \{ f \} (\langle a,Id \rangle)_{10} \} \cup \{ \langle \; \rangle \}.
$$

From the definition, it would seem that $f \upharpoonright S \times T$ depends on U
as well as f and S, and not on T,
but actually that's not the case, which is important in what follows
(hence mention of U is suppressed in the
notation). That $f \upharpoonright S \times T$ does not depend on U
can be seen from the role of f. Since f $\vdash \forall x \in S \; \exists y \in T \; \langle x, y \rangle \in
U$, given f, S, and T, we can determine U, or at least that part of U
relevant to the definition of $f \upharpoonright S \times T$. (In a
little detail, feed $\langle a, Id \rangle$ to \{f\}, as $a$
ranges through the members of S. \{f\} will produce the code for
the corresponding $y$ in T, as well as that for $\langle a, y
\rangle$ in U. To get more information about U, meaning to get
codes for members of members, unravel the given $a$ and the produced
$y$.) So if the same f realizes that some V $\not =$ U
is also a total relation (from S to T), which is possible, the same $f
\upharpoonright S \times T$ would be produced using V instead of
U.

Let
$$^{S}T = \{ f \frown \vec a \mid \exists U \; f \vdash ``U \in TotRel(S,T)" \wedge \vec a \in f \upharpoonright S \times T \} \cup \{ \langle \; \rangle \}.
$$

$^{S}T$ will be the desired C. Toward this end, first we must verify
that $^{S}T$ is a set. The most difficult part of this is that it be
well-founded. A member of $^{S}T$ is given by an f realizing that
some U is a total relation. However, there could be many different
U's realized as a total relation by the same f. If we were to
stick all those different U's together, the result might not be
well-founded. However, since $f \upharpoonright S \times T$
doesn't depend on U, this is not a problem. The rest of showing
that $^{S}T$ is a set is left to the reader.

Next we need a realizer for $^{S}T$ being a set of total
relations. A member of $^{S}T$ is given by a realizer f that some
U is a total relation; that same f will work as such a realizer
for the member of $^{S}T$ named by f (essentially $f
\upharpoonright S \times T$). Also, given a U realized by f to be a
total relation, we must produce a member V of $^{S}T$ and a
realizer that V $\subseteq$ U. This V will be $f \upharpoonright S \times
T$;
its name in $^{S}T$ will be exactly U's
realizer f; and it is easy to realize the necessary set inclusion, as $f \upharpoonright S \times
T$ is literally a subset of U.

We leave to the reader the piecing together of the strands above for the
determination of the realizer e showing that $^{S}T$ is full. The
fact that the procedure just sketched will work for any S and T
shows that e can be chosen independently of S and T.
Observe that not only is $^{S}T$ full, it is also the set of functions from S to T.

\item strong collection

Suppose
$$e \vdash \forall X \in S \; \exists Y \; \phi(X,Y).
$$
This implies that
$$\forall \langle a \rangle \in S \; \exists Y \; \{e\}(\langle a, Id \rangle) \vdash \phi(a^{S}, Y),
$$
where Id $\vdash$ $\forall x \; x=x$. By the Axiom of Choice, for each such $a$ let $Y_{a}$ be such a Y. Let Z be
$$\{ \langle a, a_{0}, ... , a_{n} \rangle \mid \langle a_{0}, ... , a_{n} \rangle \in Y_{a} \} \cup \{ \langle \; \rangle \}.
$$
Z is as desired.

\end{enumerate}

\section{Interpreting Second Order Arithmetic in CZF + full Separation}
Within CZF + full Separation, let the natural numbers be given by the set posited
by the Strong Infinity Axiom, and let the sets of natural numbers be all subsets
of said set. This is a model of Intuitionistic Second Order Arithmetic. It is an
already established result that the consistency strength of Second Order
Arithmetic is unchanged by going from intuitionistic to classical logic (see for instance \cite{tr}, where
this is shown via a double-negation translation).


\begin{thebibliography}{99}
\bibitem{aczel} Peter Aczel, The Type Theoretic Interpretation of Constructive Set Theory, in Logic Colloquium '77 (A. MacIntyre, L. Pacholski, J. Paris, eds.) (North-Holland, Amsterdam, 1978), p. 55-66
\bibitem{ac2} Peter Aczel, The type theoretic interpretation of constructive set theory: choice principles, in A.S. Troelstra and D. van Dalen (eds.), The L.E.J. Brouwer Centenary Symposium (North-Holland, Amsterdam, 1982), p. 1-40
\bibitem{ac3} Peter Aczel, The type theoretic interpretation of constructive set theory: inductive definitions, in R.B. Marcus et al. (eds.), Logic, Methodology and Philosophy of Science VII (North-Holland, Amsterdam, 1986), p. 17-49
\bibitem{ac4} Peter Aczel and Michael Rathjen, Notes on constructive set theory, Technical Report 40, 2000/2001, Mittag-Leffler Institute, Sweden, 2001.
\bibitem{beeson} Michael Beeson, Foundations of Constructive Mathematics, Springer-Verlag, Berlin, 1985
\bibitem{BFPS} W. Buchholz, S. Feferman, W. Pohlers, and W. Sieg, Iterated Inductive Definitions and Subsystems of Analysis: Recent Proof-Theoretical Studies, Lecture Notes in Mathematics No. 897, Springer-Verlag, Berlin, 1981
\bibitem{KGO} Claire Kouwenhoven-Gentil and Jaap van Oosten, Algebraic Set Theory
and the Effective Topos, {\em Journal of Symbolic Logic}, to
appear; also at http://www.math.uu.nl/people/jvoosten/realizability.html
\bibitem{lubarsky} Robert Lubarsky, Independence results around constructive ZF,
Annals of Pure and Applied Logic, 2005, p. 209-225
\bibitem{mccarty} David McCarty, Realizability and recursive mathematics,
Ph.D. thesis, Oxford University (1984)
and Carnegie-Mellon University Technical Report CMU-CS-84-131
\bibitem{my} John Myhill, Constructive set theory, {\em Journal of
Symbolic Logic}, 1975, p. 347-382
\bibitem{rathjen} Michael Rathjen, The Strength of Some Martin-L\"of
Type Theories, {\em Archive for Mathematical Logic}, 1994, p. 347-385
\bibitem{rath2} Michael Rathjen, The higher infinite in proof theory,
in J.A. Makowsky and E.V.
Ravve (eds.), Logic Colloquium '95, Springer Lecture Notes in Logic,
Vol. 11 (Springer, New York, Berlin, 1998), p. 275-304
\bibitem{simp} Alex Simpson, Constructive set theories and their
category-theoretic models, in L.Crosilla and P.Schuster, eds., From Sets and Types
to Topology and Analysis: Towards Practicable Foundations for Constructive
Mathematics, Selected Articles from a Workshop, San Servolo, Venice, Italy,
12--16 May 2003 (Oxford Logic Guides, Oxford University Press, forthcoming)
\bibitem{tr} A.S. Troelstra, Intuitionistic formal systems, in
A.S. Troelstra (ed.), Metamathematical Investigation of
Intuitionistic Arithmetic and Analysis, Lecture Notes in
Mathematics No. 344 (series editors Dold and Eckmann),
Springer-Verlag, Berlin-Heidelberg-New York,
1973, ch. 10


\end{thebibliography}
\end{document}